\newcounter{num}[section]
\newenvironment{theorem}
{\refstepcounter{num}%
\bigskip\noindent\nopagebreak[4]{\bf Theorem~\arabic{section}.\arabic{num}. }\it}
\newenvironment{lemma}
{\refstepcounter{num}%
\bigskip\noindent\nopagebreak[4]{\bf Lemma~\arabic{section}.\arabic{num}. }\it}
\newenvironment{remark}
{\refstepcounter{num}%
\bigskip\noindent\nopagebreak[4]{\bf Remark~\arabic{section}.\arabic{num}. }}
\newcommand{\LL}{{\mathcal{L}}}
\newcommand{\Ss}{{\mathcal{S}}}
\newcommand{\V}{{\mathrm{V}}}
\newcommand{\pr}{{\prime}}
\renewcommand{\t}{{\tau}}
\newcommand{\s}{{\sigma}}
\newcommand{\M}{{\mathcal{M}}}
\newcommand{\dom}{{\mathbf{dom}}}
\newcommand{\im}{{\mathbf{im}}}
\begin{document}

\author{Artem N. Shevlyakov}

\title{On disjunction of equations in inverse semigroups}

\maketitle

\abstract{A semigroup $S$ is an equational domain if any finite union of algebraic sets over $S$ is algebraic. We prove that if an inverse semigroup $S$ is an equational domain in the extended language $\{\cdot,{}^{-1}\}\cup\{s|s\in S\}$ then $S$ is a group.}

\section*{Introduction}

It follows from commutative algebra that the union of two algebraic sets $Y_1,Y_2$ is algebraic over a field $k$. There are also groups, where any finite union of algebraic sets is again algebraic. Following~\cite{uniTh_IV}, such groups are called  {\it equational domains}, and in~\cite{uniTh_IV} the equational domains among groups were completely described. 

The complete classification of equational domains in groups allows us to pose the next problem.

{\bf Problem.} {\it Is there a semigroup $S$ such that
\begin{enumerate}
\item $S$ is an equational domain;
\item $S$ is not a group.
\end{enumerate}}

Naturally, the search of such semigroups should be started with the classes of semigroups which are close to groups. One of these classes is the variety of inverse semigroups with the operations of multiplication and inversion. However, we prove below that any inverse semigroup (which is not a group) is not an equational domain (Theorem~\ref{th:main_for_inverse}). 

Remark that the similar problem was considered in~\cite{rosenblatt}. Namely, there it was proved that the union $Y$ of the solution sets of two equations $x_1=x_2$, $x_3=x_4$ is not a solution set of any single equation over any inverse semigroup which is not a group. However, the approach in~\cite{rosenblatt} does not allow to prove that $Y$ is not algebraic, i.e. $Y$ is not a solution set of a {\it system of equations}. Thus, our paper generalizes the results of~\cite{rosenblatt} (see Remark~\ref{rem:about_rosenblat} for explanation of the method in~\cite{rosenblatt}).

\section{Definitions of semigroup theory}

Let us give the main definitions of semigroup theory which, for more details one can recommend~\cite{howie}.

A semigroup $S$ is called \textit{inverse} if for any $s\in S$ there exists a unique element $s^{-1}$ such that $ss^{-1}s=s$, $s^{-1}ss^{-1}=s^{-1}$. All necessary properties of inverse semigroups are given in the next theorem.

\begin{theorem}
\label{th:inverse_semigroups_properties}
Let $E=\{e\in S|ee=e\}$ be the set of idempotents of an inverse semigroup $S$. Hence,
\begin{enumerate}
\item all elements of $E$ commute with each other; in other words, $E$ is a semilattice;
\item $ss^{-1}\in E$;
\item for any $e\in E$, $s\in S$ it holds $ses^{-1}\in E$;
\item if $|E|=1$, $S$ if a group;
\item $(st)^{-1}=t^{-1}s^{-1}$ for all $s,t\in S$.
\end{enumerate}
\end{theorem}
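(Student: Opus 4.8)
The plan is to derive all five items from the defining equations $ss^{-1}s = s$, $s^{-1}ss^{-1} = s^{-1}$ together with the \emph{uniqueness} of the inverse, which is the feature that makes these arguments run. Two preliminary observations set everything up. First, since the defining equations are symmetric in $s$ and $s^{-1}$, the element $s$ is itself an inverse of $s^{-1}$, so by uniqueness $(s^{-1})^{-1} = s$; in particular $s \mapsto s^{-1}$ is an involution. Second, for an idempotent $e$ one checks $eee = e$, so $e$ is an inverse of itself and hence $e^{-1} = e$. Item (2) is then immediate, since $(ss^{-1})(ss^{-1}) = (ss^{-1}s)s^{-1} = ss^{-1}$, giving $ss^{-1} \in E$, and symmetrically $s^{-1}s \in E$.

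The heart of the theorem is item (1), and I expect this to be the main obstacle, as nothing else uses the full strength of uniqueness so essentially. Fix $e, f \in E$ and put $b = (ef)^{-1}$. The key trick is to verify that $fbe$ is \emph{also} an inverse of $ef$: expanding $(ef)(fbe)(ef)$ and collapsing $e^2=e$, $f^2=f$ reduces it to $(ef)\,b\,(ef) = ef$, while $(fbe)(ef)(fbe)$ collapses to $f\,[\,b(ef)b\,]\,e = fbe$. By uniqueness $b = fbe$. Substituting $b = fbe$ into the product $b\cdot b$ and again collapsing idempotents gives $b^2 = f\,[\,b(ef)b\,]\,e = fbe = b$, so $b = (ef)^{-1}$ is idempotent and therefore self-inverse; by involutivity this forces $ef = \big((ef)^{-1}\big)^{-1} = b^{-1} = b$, which shows that $ef \in E$, i.e. products of idempotents are idempotent. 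Finally, feeding $b = ef$ back into $b = fbe$ yields $ef = f(ef)e = (fe)^2 = fe$, the last equality because $fe$ is itself idempotent. Hence idempotents commute and $E$ is a semilattice.

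With item (1) in hand the remaining parts are routine verifications. For item (3), expand $(ses^{-1})^2 = se(s^{-1}s)es^{-1}$; since $e$ and $s^{-1}s$ are commuting idempotents the middle simplifies, and the relation $s^{-1}ss^{-1} = s^{-1}$ collapses the whole expression back to $ses^{-1}$, so $ses^{-1} \in E$. For item (5), I check that $t^{-1}s^{-1}$ is an inverse of $st$: in $(st)(t^{-1}s^{-1})(st)$ the commuting idempotents $tt^{-1}$ and $s^{-1}s$ may be swapped, after which $ss^{-1}s = s$ and $tt^{-1}t = t$ reduce it to $st$, and the dual computation gives $(t^{-1}s^{-1})(st)(t^{-1}s^{-1}) = t^{-1}s^{-1}$; uniqueness then forces $(st)^{-1} = t^{-1}s^{-1}$. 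For item (4), if $|E| = 1$ with $E = \{e\}$, then $ss^{-1} = s^{-1}s = e$ for every $s$, whence $es = (ss^{-1})s = s$ and $se = s(s^{-1}s) = s$, so $e$ is a two-sided identity and each $s$ has two-sided inverse $s^{-1}$; thus $S$ is a group.
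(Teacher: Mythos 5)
Your proof is correct and complete, with the items derived in a workable order (item (1) established before items (3) and (5), which rely on commuting idempotents). Note that the paper gives no proof of this theorem at all---it is stated as standard background with the reader referred to Howie's textbook---and your argument, in particular the uniqueness trick of checking that $fbe$ is also an inverse of $ef$, so that $b=(ef)^{-1}$ satisfies $b=fbe$, is idempotent, and forces $ef=b=fe$, is precisely the classical textbook proof that the implicit citation points to.
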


Let us define the partial order over the set of idempotents by 
\[
e\leq f \Leftrightarrow ef=e.
\]

Let $\M$ be an arbitrary set and $T(\M)$ be the class of all partial injections $f\colon \M\to\M$. 

Define for $f\in T(\M)$ 
\begin{eqnarray*}
\dom(f)=\{\mu\in \M|f\mbox{ is defined at the point } \mu\},\\
\im(f)=\{\mu\in \M|\exists \eta\in\M |\mbox{such that } \mu f=\eta\}.
\end{eqnarray*}

\begin{lemma}
Partial injections have the following properties:
\begin{enumerate}
\item if $f\in T(\M)$ is idempotent (i.e. $ff=f$), then $\dom(f)=\im(f)$ and $\mu f=\mu$ for all $\mu\in\dom(f)$; 
\item for any $f\in T(\M)$ the map $ff^{-1}$ is idempotent and $\dom(ff^{-1})=\dom(f)$;
\item for idempotent $e,f\in T(\M)$ it holds $\dom(ef)=\dom(e)\cap\dom(f)$;
\item the partial order  $\leq$ over the set of idempotents in $T(\M)$ is  
\[
e\leq f\Leftrightarrow \dom(e)\subseteq\dom(f).
\]
\end{enumerate}
\end{lemma}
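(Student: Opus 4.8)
The plan is to establish all four parts by unwinding the definitions of domain, image, and composition, using the convention that $fg$ denotes ``first apply $f$, then apply $g$'', so that $\mu(fg)=(\mu f)g$ and $\dom(fg)=\{\mu\in\dom(f):\mu f\in\dom(f)\text{'s successor, i.e. }\mu f\in\dom(g)\}$. The load-bearing observation is Part~1: an idempotent partial injection is nothing but a partial identity, namely the identity map restricted to some subset of $\M$. Once this is in hand, the remaining three parts are short computations, and I expect Part~1 to be the only genuinely delicate step.

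For Part~1, I would take $\mu\in\dom(f)$ and set $\eta=\mu f$. From $ff=f$ we get $\eta f=(\mu f)f=\mu(ff)=\mu f=\eta$, so $\eta\in\dom(f)$ and $\eta f=\eta$. Now the injectivity of $f$ finishes the argument: from $\eta f=\eta=\mu f$ and injectivity we conclude $\eta=\mu$, hence $\mu f=\mu$. Since every point of $\dom(f)$ is thereby fixed, the image $\im(f)$ coincides with $\dom(f)$. This is the only place where injectivity is genuinely used; everything afterwards is bookkeeping.

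For Part~2, I would observe that $f^{-1}$ is the inverse partial injection, so $\dom(f^{-1})=\im(f)$ and $\eta f^{-1}=\mu$ whenever $\mu f=\eta$. Hence for $\mu\in\dom(f)$ we have $\mu(ff^{-1})=(\mu f)f^{-1}=\mu$, while $\mu\notin\dom(f)$ makes $\mu(ff^{-1})$ undefined; therefore $ff^{-1}$ is the partial identity on $\dom(f)$, which is idempotent and satisfies $\dom(ff^{-1})=\dom(f)$. This recovers, in the concrete setting of $T(\M)$, the fact $ss^{-1}\in E$ already recorded in Theorem~\ref{th:inverse_semigroups_properties}.

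For Part~3, using Part~1 I would note that $\mu\in\dom(ef)$ means $\mu\in\dom(e)$ and $\mu e\in\dom(f)$; but $\mu e=\mu$ since $e$ is idempotent, so the condition is exactly $\mu\in\dom(e)\cap\dom(f)$. Finally, for Part~4, the partial order is defined by $e\leq f\Leftrightarrow ef=e$. If $ef=e$, then taking domains and using Part~3 gives $\dom(e)\cap\dom(f)=\dom(e)$, i.e. $\dom(e)\subseteq\dom(f)$. Conversely, if $\dom(e)\subseteq\dom(f)$, then $\dom(ef)=\dom(e)$ by Part~3, and for each $\mu\in\dom(e)$ Part~1 applied to both $e$ and $f$ (using $\mu\in\dom(e)\subseteq\dom(f)$) yields $\mu(ef)=(\mu e)f=\mu f=\mu=\mu e$; since $ef$ and $e$ share the same domain and agree on it, $ef=e$, that is $e\leq f$.
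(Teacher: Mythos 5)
Your proof is correct. The paper states this lemma without any proof, treating it as standard background on the symmetric inverse semigroup, so there is no argument of the author's to compare against; your verification from the definitions is exactly the routine argument the author is implicitly relying on, with Part~1 (idempotents are partial identities, via injectivity applied to $\eta f=\eta=\mu f$) correctly identified as the one step that is not pure bookkeeping. One presentational nit in Part~1: before writing $(\mu f)f$ you should first note that $\mu f\in\dom(f)$, which follows from $\dom(ff)=\dom(f)$ (equality of $ff$ and $f$ as partial maps), rather than deriving $\eta\in\dom(f)$ as a consequence of the computation; this is a matter of ordering, not a gap.
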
 

By $f|_g$ ($f,g\in T(\M)$) we denote the restriction of $f$ on the set $\dom(g)$.

The following theorem gives the connections between inverse semigroups and partial injections.

\begin{theorem}\textup{(Wagner, Preston)}
\label{th:wagner-preston}
Any inverse semigroup $S$ is embedded into $T(\M)$ for an appropriate set $\M$.
\end{theorem}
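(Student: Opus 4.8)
The plan is to imitate Cayley's theorem for groups, representing $S$ by partial right-translations of its own underlying set, so I take $\M=S$. For each $a\in S$ I would define a partial map $\rho_a$ on $S$ with domain $\dom(\rho_a)=Saa^{-1}=\{x\in S: xaa^{-1}=x\}$ and rule $x\mapsto xa$. First I would check that $\rho_a\in T(S)$: if $xa=ya$ with $x,y$ in the domain, then right-multiplying by $a^{-1}$ and using $xaa^{-1}=x$, $yaa^{-1}=y$ gives $x=y$, so $\rho_a$ is injective; a short computation shows its image is $Sa=Sa^{-1}a$ and that the inverse partial map is exactly $\rho_{a^{-1}}$, whose domain $Sa^{-1}a$ (using $(a^{-1})^{-1}=a$) coincides with this image. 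Thus $\rho_a$ is a well-defined partial injection.

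Next I would prove that $a\mapsto\rho_a$ is a homomorphism into $T(S)$, i.e. $\rho_a\rho_b=\rho_{ab}$ (functions acting on the right, so that $x(\rho_a\rho_b)=(xa)b=x(ab)$). On the shared domain the values agree automatically by associativity, so the whole content is the equality of domains $\dom(\rho_a\rho_b)=\dom(\rho_{ab})$. By definition $x\in\dom(\rho_a\rho_b)$ means $xaa^{-1}=x$ and $xabb^{-1}=xa$, while $x\in\dom(\rho_{ab})$ means $xabb^{-1}a^{-1}=x$, using $(ab)^{-1}=b^{-1}a^{-1}$ from Theorem~\ref{th:inverse_semigroups_properties}. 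One inclusion is a direct manipulation (right-multiply $xabb^{-1}=xa$ by $a^{-1}$). For the reverse inclusion I expect the crux of the whole argument: starting from $xabb^{-1}a^{-1}=x$ I must recover $xabb^{-1}=xa$, and this forces me to commute the two idempotents $a^{-1}a$ and $bb^{-1}$ past each other, which is legitimate precisely because the idempotents of $S$ form a semilattice (Theorem~\ref{th:inverse_semigroups_properties}(1)). This is the step where the inverse-semigroup hypothesis is genuinely used.

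Finally I would check injectivity of $a\mapsto\rho_a$. If $\rho_a=\rho_b$, their domains coincide, $Saa^{-1}=Sbb^{-1}$; since these principal ideals have idempotent generators and idempotents commute, this yields $aa^{-1}=bb^{-1}=:e$. The idempotent $e$ lies in the common domain, and evaluating gives $a=ea=e\rho_a=e\rho_b=eb=b$. Hence $a\mapsto\rho_a$ is an injective homomorphism and $S$ embeds into $T(S)$, proving the theorem (and, since $\rho_a^{-1}=\rho_{a^{-1}}$, inversion is preserved as well). The only delicate point is the domain equality in the middle step; everything else is bookkeeping with the identity $xaa^{-1}=x$ valid on the domain.
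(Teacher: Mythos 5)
Your proof is correct, and it is precisely the classical Wagner--Preston representation by partial right translations $\rho_a\colon Saa^{-1}\to Sa^{-1}a$, $x\mapsto xa$; the paper states the theorem without proof, deferring to the standard literature (Howie), where essentially this argument appears. You also correctly isolate the one genuinely inverse-semigroup step, commuting the idempotents $a^{-1}a$ and $bb^{-1}$ in the reverse inclusion of $\dom(\rho_a\rho_b)=\dom(\rho_{ab})$ (the companion fact $xaa^{-1}=x$ there follows directly from $a^{-1}aa^{-1}=a^{-1}$, no commuting needed), so there is nothing to repair.
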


\section{Algebraic geometry over inverse semigroups}

Let $\LL_0=\{\cdot\}$ be the standard language of semigroup theory. For a given semigroup $S$ one can extend the language $\LL_0$ by the constants corresponding to the elements of $S$
\[
\LL_S=\LL_0\cup\{s|s\in S\}.
\]

Let $S$ be an inverse semigroup. As the operation ${ }^{-1}$ is uniquely defined for any $s\in S$, one can add the inversion to the language $\LL_S$ and obtain
\[
\LL=\{\cdot,{ }^{-1}\}\cup\{s|s\in S\}.
\]

Further, any semigroup in the language $\LL$ is called an $\LL$-{\it semigroup}. 

Denote by $X$ the finite set of variables $x_1,x_2,\ldots,x_n$. A \textit{term} of the language $\LL$ in variables $X$ is a finite product of variables in integer degrees and constants.

\begin{remark}
Precisely, a term should be defined recursively by follows:
\begin{enumerate}
\item any variable or constant is a term;
\item the product of two terms is a term;
\item if $t(X)$ is a term, so is $(t(X))^{-1}$. 
\end{enumerate}
However, by the identity $(xy)^{-1}=y^{-1}x^{-1}$, one can assume that the inversion is applied to single variables. For example, the term $((s_1x)^{-1}ys_2)^{-1}$ is equivalent to
\[
s_2^{-1}y^{-1}s_1x=s_3y^{-1}s_1x,
\]
where $s_3=s_2^{-1}$.
Thus, the both definitions of a term over an inverse semigroup are equivalent. 
\end{remark}

\medskip

An {\it equation} over the language $\LL$ is an equality of two terms $\t(X)=\s(X)$. For example, the expressions  $x_1^{-1}sx_2^2=x_3^{-2}sx_1$, $s_1x_1^2x_2^{-1}s_2x_3=x_1^{-5}x_3$ are equations over $\LL$. A {\it system of equations} (a {\it system} for shortness) is an arbitrary set of equations.
 
The \textit{solution set} of a system $\Ss$ in the $\LL$-semigroup $S$ is naturally defined and denoted by $\V_S(\Ss)$. A set $Y\subseteq S^n$ is {\it algebraic} over a semigroup $S$ if there exists a system $\Ss$ in variables $x_1,x_2,\ldots,x_n$ with the solution set $Y$. 

Following~\cite{uniTh_IV}, let us give the main definition of the paper. An $\LL$-semigroup $S$ is called an  {\it equational domain (e.d.)} if any finite union $Y=Y_1\cup Y_2\cup\ldots\cup Y_n$ of algebraic sets $Y_i$ is algebraic.

Following~\cite{rosenblatt}, a term $t(x)$ of the language $\LL$ is called \textit{good} if one of the next conditions holds:
\begin{enumerate}
\item $t(x)=x$;
\item there exists constants $s_1,s_2,\ldots,s_n\in S$ with
\[
t(x)=s_1xs_1^{-1}s_2xs_2^{-1}\ldots s_nxs_n^{-1}.
\]
\end{enumerate}

The next lemma gives us the main properties of good terms.

\begin{lemma}\textup{\cite{rosenblatt}}
\label{l:good_terms}
Let $E$ be the set of idempotents of an inverse semigroup $S$. Hence,
\begin{enumerate}
\item if $e\in E$, then $t(e)\in E$ for any good term $t(x)$;
\item if $e,f\in E$ and $t(x)$ is good, then $t(ef)=t(e)t(f)$;
\item if $e,f\in E$, $e\leq f$, then for any good term $t(x)$ it holds $t(e)\leq t(f)$;
\item for any term $t(x)$ of the language $\LL$ there exists a good term $t^\pr(x)$ such that $t(e)=t^\pr(e)$ for any $e\in E$;
\item for any term $t(x,y)$ of the language $\LL$ there exists good terms $t^\pr(x),r^\pr(y)$ and a element $d\in S$ such that $t(e,f)=t^\pr(e)r^\pr(f)d$ for all $e,f\in E$.
\end{enumerate}
\end{lemma}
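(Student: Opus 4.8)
The first three parts follow directly from Theorem~\ref{th:inverse_semigroups_properties}. For part (1), if $t(x)=x$ there is nothing to prove, while if $t(x)=s_1xs_1^{-1}\cdots s_nxs_n^{-1}$ then $t(e)=\prod_{i=1}^n s_ies_i^{-1}$ is a product of idempotents by Theorem~\ref{th:inverse_semigroups_properties}(3), and a product of idempotents is again an idempotent because $E$ is a semilattice (Theorem~\ref{th:inverse_semigroups_properties}(1)). For part (2) the case $t(x)=x$ is trivial, so let $t$ be of the second type and put $a_i=s_ies_i^{-1}$, $b_i=s_ifs_i^{-1}$, $c_i=s_i(ef)s_i^{-1}$. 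In $a_ib_i=s_ie\,(s_i^{-1}s_i)\,fs_i^{-1}$ the factor $s_i^{-1}s_i$ is an idempotent, so it commutes with $e$ and $f$; moving it next to $s_i$ and using $s_i(s_i^{-1}s_i)=s_i$ gives the factorwise identity $a_ib_i=c_i$. Since all $a_i,b_j$ lie in the commutative semilattice $E$ they pairwise commute, so $t(e)t(f)=a_1\cdots a_nb_1\cdots b_n=(a_1b_1)\cdots(a_nb_n)=c_1\cdots c_n=t(ef)$. Part (3) is then immediate: $e\leq f$ means $ef=e$, hence by part (2) $t(e)=t(ef)=t(e)t(f)$ with $t(e),t(f)\in E$, which is exactly $t(e)\leq t(f)$.

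The real work is in parts (4) and (5), where an arbitrary term must be collapsed, on idempotent arguments, to good-term shape. My plan rests on two observations. First, for an idempotent $e$ one has $e^{k}=e$ for every nonzero integer $k$ (in particular $e^{-1}=e$, by uniqueness of the inverse), so substituting idempotents turns each power of a variable into a single occurrence of that variable; thus $t(e,f)$ becomes an alternating product $s_0z_1s_1z_2\cdots z_ks_k$ with each $z_i\in\{e,f\}$ and each $s_i\in S$. Second, the conjugation identity
\[
sg=(sgs^{-1})s\qquad(s\in S,\ g\in E)
\]
holds, where $sgs^{-1}\in E$ by Theorem~\ref{th:inverse_semigroups_properties}(3); indeed $sgs^{-1}s=s\,(gs^{-1}s)=s\,(s^{-1}sg)=(ss^{-1}s)g=sg$, again using that $g$ and $s^{-1}s$ commute. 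This identity lets me push a constant rightwards past an idempotent while replacing the idempotent by its conjugate.

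Applying the identity repeatedly, I would move the leftmost constant past the first variable occurrence, merge it with the next constant, move the merged constant past the second occurrence, and so on. Each step pulls one more (conjugated) idempotent to the far left and carries the accumulated constant further right, so after $k$ steps $t(e,f)=g_1g_2\cdots g_k\,d$, where $d=s_0s_1\cdots s_k$ and each $g_i=c_{i-1}z_ic_{i-1}^{-1}$ with $c_{i-1}$ the constant accumulated before the $i$-th occurrence. Every $g_i$ is an idempotent, so they all commute, and I may gather the factors with $z_i=e$ separately from those with $z_i=f$; the first group is exactly $t^\pr(e)$ for the good term $t^\pr(x)=\prod_{z_i=e}c_{i-1}xc_{i-1}^{-1}$, and the second is $r^\pr(f)$ for $r^\pr(y)=\prod_{z_i=f}c_{i-1}yc_{i-1}^{-1}$. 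This yields $t(e,f)=t^\pr(e)r^\pr(f)d$, proving part (5); part (4) is obtained from the same reduction applied to a term in the single variable $x$.

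I expect the normalization in parts (4)--(5) to be the only genuine obstacle: parts (1)--(3) are formal consequences of the semilattice property, whereas the reduction requires careful bookkeeping of how the constants accumulate and a verification that the conjugation identity and the final regrouping both preserve the value. Both of these rely essentially on commutativity of idempotents (Theorem~\ref{th:inverse_semigroups_properties}(1)), which is the one place where the inverse-semigroup hypothesis is genuinely used.
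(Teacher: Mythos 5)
The paper contains no proof of this lemma at all---it is imported wholesale from \cite{rosenblatt}---so your argument must be judged on its own. Parts (1)--(3) of your proof are correct and complete: closure of $E$ under products, the factorwise identity $a_ib_i=c_i$ obtained by commuting the idempotent $s_i^{-1}s_i$ past $e$, the regrouping by commutativity of idempotents, and the formal derivation of (3) from (1)--(2) are all exactly right. The reduction in (4)--(5) is also the correct strategy, and the two identities it rests on---$e^k=e$ for all nonzero integers $k$ (including $e^{-1}=e$) and $sg=(sgs^{-1})s$ for $g\in E$---are both verified soundly.

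There are, however, two boundary cases your bookkeeping does not cover, one of which is a genuine (if small) gap. First, if the term begins with a variable occurrence, then your $c_0$ is an empty product, $g_1=z_1$ is a \emph{bare} idempotent, and the grouped product for that variable has the shape $e\cdot\prod_i c_ie c_i^{-1}$, which is neither $x$ nor a pure product of conjugates---so it is not a good term under the paper's strict definition, which does not permit mixing a bare factor with conjugated ones. For $t(x)=xsx$, say, your reduction gives $t(e)=e\,(ses^{-1})\,s$, and the factor $e\cdot ses^{-1}$ cannot in general be recast as $\prod_i c_iec_i^{-1}$ with fixed constants, since the natural conjugator for the bare occurrence would be an identity element, which $S$ need not have. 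The clean repair is to enlarge the class of good terms to finite products of factors each of which is either $x$ or $sxs^{-1}$; your proofs of (1)--(3) survive verbatim for this wider class, and nothing in Lemmas~\ref{l:incompatible_idempotents} and \ref{l:idempotents_linearly_ordered} uses more than those three properties. Second, if the term contains no constants at all (e.g.\ $t(x,y)=xy$), your $d=s_0s_1\cdots s_k$ is empty, whereas the statement demands $d\in S$; this sloppiness is inherited from the lemma itself, and one should simply allow $d$ to be absent. Note also that part (4) as printed omits the trailing constant entirely, yet some constant is unavoidable: for $t(x)=xs$ one gets $t(e)=es$, which need not be idempotent, while $t^\pr(e)$ always is. Your reduction silently proves the version of (4) \emph{with} the constant $d$, i.e.\ $t(e)=t^\pr(e)d$ on $E$---which is precisely the form the paper actually invokes in the proofs of Lemmas~\ref{l:incompatible_idempotents} and \ref{l:idempotents_linearly_ordered}, so this deviation from the literal statement is a correction rather than an error.
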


\section{Main result}

\begin{lemma}
\label{l:incompatible_idempotents}
Suppose an inverse semigroup $S$ contains two incomparable idempotents $e,f$, therefore $S$ is not an e.d. in the language $\LL$. 
\end{lemma}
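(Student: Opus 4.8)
The plan is to refute the equational-domain property by producing a single finite union of algebraic sets that is not algebraic. Working inside $S^4$, I would take the two algebraic sets $Y_1=\V_S(x_1=x_2)$ and $Y_2=\V_S(x_3=x_4)$, each the solution set of one equation, and put $D=Y_1\cup Y_2=\{(a_1,a_2,a_3,a_4)\mid a_1=a_2\ \text{or}\ a_3=a_4\}$. Since $D$ is a union of two algebraic sets, it suffices to show that $D$ is not algebraic, and this alone forces $S$ not to be an e.d. So I assume for contradiction that $D=\V_S(\Ss)$ for some system $\Ss$ in the variables $x_1,\dots,x_4$.

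The core of the argument is a closure property of the \emph{idempotent} solutions of a single equation. Let $E$ be the set of idempotents, and for idempotent tuples $\bar a=(a_1,\dots,a_4)$ and $\bar b=(b_1,\dots,b_4)$ in $E^4$ write $\bar a\bar b=(a_1b_1,\dots,a_4b_4)$; by Theorem~\ref{th:inverse_semigroups_properties} this is again an idempotent tuple. I would prove the statement: \emph{if $\bar a$ and $\bar b$ both satisfy an equation $\t=\s$ of $\LL$, then so does $\bar a\bar b$.} Granting this, $\V_S(\Ss)\cap E^4$ is closed under coordinatewise products. Applying it to $P_1=(e,e,e,f)$ (which lies in $Y_1$) and $P_2=(e,f,f,f)$ (which lies in $Y_2$), both of which therefore lie in $D=\V_S(\Ss)$, I get $P_1P_2=(e,ef,ef,f)\in\V_S(\Ss)=D$. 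But incomparability of $e$ and $f$ means $ef\neq e$ and $ef\neq f$, so $P_1P_2\notin D$, which is the contradiction that finishes the proof.

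To establish the closure property I would first normalize each side on idempotent inputs. Extending Lemma~\ref{l:good_terms}(5) from two to four variables by the same idempotent-shifting computation (push every idempotent factor leftward past the constants and regroup the commuting results by variable), I obtain good terms $u_i,v_i$ and constants $d_\t,d_\s\in S$ with $\t(\bar a)=u_1(a_1)\cdots u_4(a_4)\,d_\t$ and $\s(\bar a)=v_1(a_1)\cdots v_4(a_4)\,d_\s$ for all $\bar a\in E^4$. By Lemma~\ref{l:good_terms}(1) each factor $u_i(a_i)$ is idempotent, and by Lemma~\ref{l:good_terms}(2) the assignment $a_i\mapsto\dom(u_i(a_i))$ satisfies $\dom(u_i(a_ib_i))=\dom(u_i(a_i))\cap\dom(u_i(b_i))$. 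Now I would pass to the Wagner--Preston representation $S\hookrightarrow T(\M)$ (Theorem~\ref{th:wagner-preston}), where every idempotent is a partial identity. There $u_1(a_1)\cdots u_4(a_4)$ is the partial identity on $U_{\bar a}=\bigcap_i\dom(u_i(a_i))$, so $\t(\bar a)$ is exactly the restriction of $d_\t$ to $U_{\bar a}$: its domain is $F_\t(\bar a)=\dom(d_\t)\cap U_{\bar a}$, and it acts there as the fixed injection $d_\t$. Hence $\bar a$ satisfies $\t=\s$ precisely when $F_\t(\bar a)=F_\s(\bar a)$ and $d_\t,d_\s$ agree on this common set. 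The multiplicativity above gives $F_\t(\bar a\bar b)=F_\t(\bar a)\cap F_\t(\bar b)$ and likewise for $F_\s$; so if $\bar a,\bar b$ are solutions then $F_\t(\bar a\bar b)=F_\s(\bar a\bar b)$, and since this set is contained in $F_\t(\bar a)$, on which $d_\t$ and $d_\s$ already coincide, the actions agree there too. Thus $\bar a\bar b$ is a solution.

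The step I expect to be the real obstacle is exactly this closure property. A direct element-wise attempt to verify $u_1(a_1b_1)\cdots d_\t=v_1(a_1b_1)\cdots d_\s$ gets stuck, because the trailing constants $d_\t$ and $d_\s$ are in general distinct and cannot be cancelled against the commuting idempotent factors. The whole reason for routing through $T(\M)$ and reasoning with domains is that passing to the coordinatewise product only \emph{shrinks} the common domain, and on a smaller domain the equality of the two actions is automatically inherited from the larger one — which dissolves the difficulty with the constants entirely.
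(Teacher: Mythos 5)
Your proof is correct, but it takes a genuinely different route from the paper. The paper works in one variable: it assumes a system $\Ss(x)$ with $\V_S(\Ss)=\V_S(x=e)\cup\V_S(x=f)$ and shows directly that the single point $ef$ satisfies every equation of $\Ss$, using exactly the mechanism you rediscovered --- normalize each side to (good term)$\cdot$(constant) via Lemma~\ref{l:good_terms}, pass to the Wagner--Preston representation, and observe that since $t(ef)=t(e)t(f)$ only shrinks domains, the equalities at $e$ and at $f$ restrict to an equality at $ef$. You instead work in $S^4$ with the Rosenblatt set $\V_S(x_1=x_2)\cup\V_S(x_3=x_4)$, prove the more general closure statement (idempotent solution tuples of a single $\LL$-equation are closed under coordinatewise products), and specialize to $(e,e,e,f)$ and $(e,f,f,f)$ to land on $(e,ef,ef,f)$, which incomparability excludes from the union. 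Both contradictions are driven by the same domain-restriction engine, so the mathematical core coincides; the differences are in the packaging. What your version buys: your uniform ``equal domains, inherited action'' formulation handles the empty-domain case automatically (both sides become the empty map, hence equal), so you avoid the paper's first case, where it must argue that an empty domain yields a zero in $S$ and then cite the external result of~\cite{ED_I} that semigroups with zero are not equational domains --- your argument is more self-contained on this point. What it costs: you need the four-variable extension of Lemma~\ref{l:good_terms}(5), which you assert rather than prove; it is indeed routine (push constants left past idempotents via $se=(ses^{-1})s$ and regroup the commuting conjugates by variable, noting $e^{-1}=e$ on $E$), and the paper itself already uses a one-variable-with-constant form not literally contained in the lemma's statement, so this is an acceptable debt --- but in a full write-up you should record it, including the degenerate cases where some variable does not occur (drop that factor) or no constant occurs (take $d_\t$ to be the identity of $T(\M)$, which need not lie in $S$ but is all the argument uses). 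Also note your closure lemma is stated for a single equation and extends to systems simply because satisfaction of a system is satisfaction of each equation, which is exactly why your approach, like the paper's, genuinely goes beyond~\cite{rosenblatt}.
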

\begin{proof}

Assume there exists a system $\Ss(x)$ with the solution set 
\begin{equation}
\label{eq:S=x=e_vee_x=f}
\V_S(\Ss)=\V_S(x=e)\cup\V_S(x=f).
\end{equation} 
Below we shall prove that the point $ef$ satisfies $\Ss$, and obtain the contradiction.

Let $w(x)=w^\pr(x)$ be an equation of $\Ss$. By Lemma~\ref{l:good_terms}, there exists good terms $t(x), t^\pr(x)$ and elements $d,d^\pr\in S$ such that the equation $w(x)=w^\pr(x)$ is equivalent to $t(x)d=t^\pr(x)d^\pr$ over the set $E$.

From the equality~(\ref{eq:S=x=e_vee_x=f}) we have
\begin{eqnarray}
\label{eq:ololo1}
t(e)d=t^\pr(e)d^\pr,\\
\label{eq:ololo2}
t(f)d=t^\pr(f)d^\pr.
\end{eqnarray}

By Theorem~\ref{th:wagner-preston}, the inverse semigroup $S$ is a subsemigroup in $T(\M)$, hence all elements of $S$ can be considered as partial injections over the set $\M$. Let us show $w(ef)=w^\prime(ef)$, or equivalently
\begin{equation}
t(e)t(f)d=t^\pr(e)t^\pr(f)d^\pr.
\label{eq:ololo3}
\end{equation}

The equality~(\ref{eq:ololo3}) may fail in two cases.

Firstly, the set $\dom(t(e)t(f)d)$ maybe empty. Hence the element $t(e)t(f)d$ is the zero of the semigroup $S$. However, in paper~\cite{ED_I} we proved that any semigroup with a zero is not an e.d.

Secondly, it can be exist an element $\mu\in \dom(t(e)t(f)d)$. Here we have $\mu\in \dom(t(e)d)$, $\mu\in \dom(t(f)d)$. 

By the equalities~(\ref{eq:ololo1},\ref{eq:ololo2}) one can obtain 
$\mu d|_{t(e)}=\mu d^\pr|_{t^\pr(e)}$, $\mu d|_{t(f)}=\mu d^\pr|_{t^\pr(f)}$. Therefore, $\mu d|_{t(e)t(f)}=\mu d^\pr|_{t^\pr(e)t^\pr(f)}$, and it implies the equality~(\ref{eq:ololo3}).
Thus, $ef\in\V_S(\Ss)$ and we came to the contradiction with the choice of the system $\Ss$.
\end{proof}

\begin{lemma}
\label{l:idempotents_linearly_ordered}
Suppose the semilattice of idempotents $E$ of an inverse semigroup $S$ is linearly ordered, and $|E|>1$. Thus, $S$ is not an e.d. in the language $\LL$.
\end{lemma}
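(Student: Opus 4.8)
The plan is to imitate the strategy of Lemma~\ref{l:incompatible_idempotents}, but to replace its one-variable union by a two-variable one. The point is that over a chain every subset of $E$ is automatically closed under the semilattice product (for $a,b$ in a chain, $ab\in\{a,b\}$), so the single-variable argument of Lemma~\ref{l:incompatible_idempotents} carries no information here; passing to two coordinates is exactly what lets a componentwise product escape a union. Since $E$ is linearly ordered and $|E|>1$, fix idempotents $e<f$, so that $ef=fe=e\neq f$.

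First I would assume, for contradiction, that $S$ is an e.d. Then the algebraic sets $Y_1=\V_S(x_1=f)$ and $Y_2=\V_S(x_2=f)$ in $S^2$ have an algebraic union $Y=Y_1\cup Y_2=\{(a,b)\mid a=f\text{ or }b=f\}$, say $Y=\V_S(\Ss)$ for a system $\Ss(x_1,x_2)$. The two points $p_1=(f,e)$ and $p_2=(e,f)$ lie in $Y$ (the first satisfies $x_1=f$, the second $x_2=f$), whereas their componentwise product $p_1p_2=(fe,ef)=(e,e)$ does not, because $e\neq f$. The goal is therefore to show that $(e,e)$ nonetheless satisfies $\Ss$, contradicting $Y=\V_S(\Ss)$.

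The core is a two-variable product-closure statement: if $\bar a,\bar b\in E^2$ both satisfy $\Ss$, then so does $\bar a\bar b$. To prove it, take an equation $w(x_1,x_2)=w^\pr(x_1,x_2)$ of $\Ss$ and apply part~5 of Lemma~\ref{l:good_terms} to each side, reducing it over $E$ to $t(x_1)r(x_2)d=t^\pr(x_1)r^\pr(x_2)d^\pr$ with $t,r,t^\pr,r^\pr$ good. Using $t(a_1b_1)=t(a_1)t(b_1)$ and the like (part~2) together with the commutativity of idempotents (Theorem~\ref{th:inverse_semigroups_properties}), the claim for $\bar a\bar b$ rearranges into: from $Pd=P^\pr d^\pr$ and $Qd=Q^\pr d^\pr$, where $P=t(a_1)r(a_2)$, $Q=t(b_1)r(b_2)$, and $P^\pr,Q^\pr$ are the primed analogues (all idempotent), deduce $PQd=P^\pr Q^\pr d^\pr$. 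This is precisely the partial-injection computation already carried out in Lemma~\ref{l:incompatible_idempotents}: passing to $T(\M)$ via Theorem~\ref{th:wagner-preston}, either $PQd$ has empty domain, so $S$ has a zero and is not an e.d.\ by~\cite{ED_I} (which already gives the conclusion), or one verifies the equality pointwise on $\dom(PQd)$ through the restrictions $\mu d|_{P}=\mu d^\pr|_{P^\pr}$ and $\mu d|_{Q}=\mu d^\pr|_{Q^\pr}$.

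Applying the closure to $\bar a=p_1$, $\bar b=p_2$ then gives $(e,e)=p_1p_2\in\V_S(\Ss)=Y$, the desired contradiction. I expect the main difficulty to be bookkeeping rather than conceptual: separating a multi-variable equation into the good-term form $t(x_1)r(x_2)d$ and rearranging the ensuing product of commuting idempotents so that the partial-injection lemma of Lemma~\ref{l:incompatible_idempotents} applies verbatim. The genuinely new ingredient, by contrast, is only the combinatorial observation that in two variables the product $p_1p_2$ can leave the union $Y$, which is impossible in a single variable over a chain.
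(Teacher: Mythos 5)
Your proof is correct, but it takes a genuinely different route from the paper's. The paper argues by contradiction with heavy use of the chain hypothesis: it takes an equation reduced to the form $t(x)r(y)d=t^\pr(x)r^\pr(y)d^\pr$ that fails at $(f,f)$, orients the failure as $t(f)r(f)<t^\pr(f)r^\pr(f)$ using linearity, and then works through two failure modes and four subcases (according to how $t(f)$ compares with $r(f)$ and $t^\pr(f)$ with $r^\pr(f)$), contradicting in each one of the three substitution identities at $(e,e)$, $(e,f)$, $(f,e)$. You instead promote the computation of Lemma~\ref{l:incompatible_idempotents} to a two-variable closure principle --- idempotent solutions of a system are closed under componentwise products --- and your reduction is sound: by Lemma~\ref{l:good_terms}(1,2) and commutativity of idempotents, the factors $t(a_1),t(b_1),r(a_2),r(b_2)$ are commuting partial identities in $T(\M)$, so with $P=t(a_1)r(a_2)$, $Q=t(b_1)r(b_2)$ and their primed analogues, the claim becomes: $Pd=P^\pr d^\pr$ and $Qd=Q^\pr d^\pr$ imply $PQd=P^\pr Q^\pr d^\pr$. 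This does go through verbatim: for $\mu\in\dom(PQd)=\dom(P)\cap\dom(Q)\cap\dom(d)$ one gets $\mu\in\dom(P^\pr)\cap\dom(d^\pr)$ with $\mu d=\mu d^\pr$ from the first identity and $\mu\in\dom(Q^\pr)$ from the second, and symmetrically; note this symmetry in fact yields $\dom(PQd)=\dom(P^\pr Q^\pr d^\pr)$ outright, so the empty-domain/zero detour through the reference you cite (which the paper's Lemma~\ref{l:incompatible_idempotents} also takes) is actually dispensable --- if one side has empty domain, so does the other, and both equal the empty map. What your route buys: it is shorter, avoids the four-subcase bookkeeping entirely, and uses only the existence of two \emph{comparable} distinct idempotents $e<f$ rather than a linearly ordered $E$; since any two distinct idempotents are either comparable or produce the comparable pair $ef<e$, your closure lemma would in fact yield Theorem~\ref{th:main_for_inverse} in one stroke, subsuming both of the paper's lemmas. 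What the paper's route buys is only concreteness: each subcase pins the contradiction to an explicit substitution identity, without having to formulate (or trust) the multivariable closure statement in the presence of the constants $d,d^\pr$.
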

\begin{proof}
Assume there exists a system $\Ss(x,y)$ with the solution set
\begin{equation}
\label{eq:S=x=e_vee_y=e}
\V_S(\Ss)=\V_S(x=e)\cup\V_S(y=e),
\end{equation} 
where $e$ is not a minimal idempotent. Below we shall prove that $\Ss$ satisfies $(f,f)$, where $f$ is an arbitrary idempotent less than $e$.

Let $w(x,y)=w^\pr(x,y)\in\Ss$ be an equation which does not satisfy the point $(f,f)$. By Lemma~\ref{l:good_terms}, there exists good terms $t(x), t^\pr(x), r(y), r^\pr(y)$ and elements $d,d^\pr\in S$ such that the equation $w(x,y)=w^\pr(x,y)$ is equivalent to $t(x)r(y)d=t^\pr(x)r^\pr(y)d^\pr$ over the semilattice $E$.

By~(\ref{eq:S=x=e_vee_y=e}), we have
\begin{eqnarray}
\label{eq:ee}
t(e)r(e)d=t^\pr(e)r^\pr(e)d^\pr,\\
\label{eq:ef}
t(e)r(f)d=t^\pr(e)r^\pr(f)d^\pr,\\
\label{eq:fe}
t(f)r(e)d=t^\pr(f)r^\pr(e)d^\pr.
\end{eqnarray}

By the choice of the equation,
\begin{equation}
t(f)r(f)d\neq t^\pr(f)r^\pr(f)d^\pr.
\label{eq:t(f)r(f)d_neq_t(f)r(f)d}
\end{equation}
As all idempotents are linearly ordered, one can put 
\begin{equation}
\label{eq:t(f)r(f)<t(f)r(f)}
t(f)r(f)<t^\pr(f)r^\pr(f).
\end{equation} 

By Theorem~\ref{th:wagner-preston} $S$ is a subsemigroup in $T(\M)$, hence all elements of $S$ are partial injections over the set $\M$. 

There are exactly two cases:
\begin{enumerate}
\item there exists an element $\mu\in\M$ such that $\mu\in\dom(t(f)r(f))$, but $\mu d\neq \mu d^\pr$;
\item there exists $\mu\in\M$ with $\mu\notin\dom(d|_{t(f)r(f)})$, $\mu\in\dom(d^\pr|_{t\pr(f)r^\pr(f)})$.
\end{enumerate} 

Consider the first case. By Lemma~\ref{l:good_terms} we have:
\begin{eqnarray*}
t(f)r(f)\leq t(e)r(e),\\
t^\pr(f)r^\pr(f)\leq t^\pr(e)r^\pr(e),
\end{eqnarray*}
that implies the inclusions:
\begin{eqnarray*}
\dom(t(f)r(f))\subseteq \dom(t(e)r(e)),\\
\dom(t^\pr(f)r^\pr(f))\subseteq\dom(t^\pr(e)r^\pr(e)),
\end{eqnarray*}
Thus, $\mu\in\dom(t(e)r(e))\cap\dom(t^\pr(e)r^\pr(e))$. As $\mu d\neq \mu d^\pr$, we obtain $d|_{t(e)r(e)}\neq d^\pr|_{t^\pr(e)r^\pr(e)}$ that contradicts with the condition~(\ref{eq:ee}).

\medskip

Consider the second case. As all idempotents are linearly ordered, we have exactly four possibilities:
\begin{enumerate}
\item $t(f)\leq r(f)$, $t^\pr(f)\leq r^\pr(f)$. Therefore, $t(f)r(e)=t(f)$, $t^\pr(f)r^\pr(e)=t^\pr(f)$ (here we use $r(f)\leq r(e)$, $r^\pr(f)\leq r^\pr(e)$). Thus, the equality~(\ref{eq:fe}) is reduced to $t(f)d=t^\pr(f)d^\pr$. On the other hand, the inequality~(\ref{eq:t(f)r(f)d_neq_t(f)r(f)d}) becomes $t(f)d\neq t^\pr(f)d^\pr$, and we come to the contradiction.

\item $t(f)\leq r(f)$, $r^\pr(f)\leq t^\pr(f)$. 

The expression~(\ref{eq:t(f)r(f)<t(f)r(f)},\ref{eq:fe}) are reduced to  
\begin{eqnarray*}
%\label{eq:ineq_new}
t(f)<r^\pr(f),\\
%\label{eq:fe_new}
t(f)d=t^\pr(f)r^\pr(e)d^\pr.
\end{eqnarray*}

Hence, $\mu\in \dom(r^\pr(f))\setminus \dom(t(f))$. As $r^\pr(f)\leq t^\pr(f)$, $r^\pr(f)\leq r^\pr(e)$, then $\mu\in\dom(t^\pr(f)r^\pr(e))$. Since $\mu\in\dom(d^\pr)$, we have $\mu\in\dom(t^\pr(f)r^\pr(e)d^\pr)$. Thus, the equality $t(f)d=t^\pr(f)r^\pr(e)d^\pr$ implies $\mu\in\dom(t(f)d)$ that contradicts with the condition $\mu\notin\dom(t(f))$.

\item $r(f)\leq t(f)$, $t^\pr(f)\leq r^\pr(f)$. Therefore, the expressions выражения~(\ref{eq:t(f)r(f)<t(f)r(f)},\ref{eq:ef}) become
\begin{eqnarray*}
%\label{eq:ineq_new1}
r(f)<t^\pr(f),\\
%\label{eq:ef_new1}
r(f)d=t^\pr(e)r^\pr(f)d^\pr.
\end{eqnarray*}
By the condition, there exists an element $\mu\in\dom(t^\pr(f))\setminus\dom(r(f))$, $\mu\in\dom(d^\pr)$.

As $t^\pr(f)\leq t^\pr(e)$ and $t^\pr(f)\leq r^\pr(f)$, we have $\mu\in\dom(t^\pr(e)r^\pr(f))$. Thus, the equality $r(f)d=t^\pr(e)r^\pr(f)d^\pr$ implies $\mu\in\dom(r(f)d)$ that contradicts with the condition $\mu\notin\dom(r(f))$.

\item $r(f)\leq t(f)$, $r^\pr(f)\leq t^\pr(f)$. In this case the expressions~(\ref{eq:t(f)r(f)<t(f)r(f)},\ref{eq:ef}) become
\begin{eqnarray*}
r(f)<r^\pr(f),\\
r(f)d=r^\pr(f)d^\pr,
\end{eqnarray*}
and there exists an element $\mu\in\dom(r^\pr(f))\setminus\dom(r(f))$, $\mu\in\dom(d^\pr)$. It follows from~(\ref{eq:t(f)r(f)<t(f)r(f)}) that $\mu\in\dom(r(f)d)$, and we come to the contradiction with $\mu\notin\dom(r(f))$. 

\end{enumerate} 

\end{proof}

The lemmas proven above give the main result.

\begin{theorem}
\label{th:main_for_inverse}
If an inverse semigroup $S$ is an e.d. in the language $\LL$, then $S$ is a group.
\end{theorem}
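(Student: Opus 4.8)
The plan is to prove the theorem by contraposition, reducing it entirely to the two lemmas already established. I would assume that $S$ is an e.d.\ in the language $\LL$ and aim to show that $S$ is a group. Suppose, for contradiction, that $S$ is not a group. By Theorem~\ref{th:inverse_semigroups_properties}(4), the condition $|E|=1$ forces $S$ to be a group; hence the failure to be a group gives $|E|>1$, so the semilattice $E$ of idempotents contains at least two distinct elements.

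The key structural observation is that the partial order on $E$ admits an exhaustive dichotomy: either $E$ contains a pair of incomparable idempotents, or every two idempotents are comparable, i.e.\ $E$ is linearly ordered. I would treat these two cases separately. In the first case, the existence of incomparable idempotents $e,f\in E$ lets me invoke Lemma~\ref{l:incompatible_idempotents} directly, concluding that $S$ is not an e.d.\ and contradicting the hypothesis. In the second case, $E$ is linearly ordered and $|E|>1$, which is exactly the setting of Lemma~\ref{l:idempotents_linearly_ordered}; applying it yields that $S$ is not an e.d., again a contradiction.

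Since both cases of the dichotomy lead to a contradiction with the assumption that $S$ is an e.d., the supposition that $S$ is not a group is untenable, and therefore $S$ must be a group.

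There is no genuine obstacle at this stage, as the entire burden of the argument has been discharged in the two preceding lemmas. The only point requiring care is verifying that the dichotomy is truly exhaustive and that the two lemmas between them cover every possibility with $|E|>1$; once this is checked, the theorem follows immediately.
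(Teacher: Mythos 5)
Your proposal is correct and follows essentially the same route as the paper: contraposition via Theorem~\ref{th:inverse_semigroups_properties}(4) to get $|E|>1$, then the exhaustive dichotomy between a pair of incomparable idempotents (Lemma~\ref{l:incompatible_idempotents}) and a linearly ordered $E$ (Lemma~\ref{l:idempotents_linearly_ordered}). No gaps; your explicit check that the two lemmas cover all cases with $|E|>1$ is exactly the point the paper's shorter proof leaves implicit.
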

\begin{proof}
By Theorem~\ref{th:inverse_semigroups_properties}, the semigroup $S$ contains at least two idempotents $e,f$. If $e,f$ are incomparable, by Lemma~\ref{l:incompatible_idempotents}, $S$ is not an e.d. Otherwise, one can apply Lemma~\ref{l:idempotents_linearly_ordered}, and $S$ is not an e.d.
\end{proof}

\begin{remark}
\label{rem:about_rosenblat}
In~\cite{rosenblatt} it was proved that the set $\M=\{(x_1,x_2,x_3,x_4)|x_1=x_2\mbox{ or }x_3=x_4\}$ is not a solution set of any equation over an inverse semigroup $S$ (where $S$ is not a group). Proving this result it was assumed that $\M$ equals to the solution set of some equation $t(x_1,x_2,x_3,x_4)=s(x_1,x_2,x_3,x_4)$ of the language $\LL$. Further, the parameters of the equation defined a point $P$ such that 
\[
P\in\V_S(t(x_1,x_2,x_3,x_4)=s(x_1,x_2,x_3,x_4))\setminus\M,
\]
and, finally, it was obtained $\V_S(t(x_1,x_2,x_3,x_4)=s(x_1,x_2,x_3,x_4))\neq\M$.

However, in the current paper we prove that $\M$ is not a solution set of any {\it system of equations}. It is more general statement, and the approach of the paper~\cite{rosenblatt} can not be applied here. 

Indeed, if assume $\M=\V_S(\Ss)$, the existence of a point $P$ defined by some equation $t(x_1,x_2,x_3,x_4)=s(x_1,x_2,x_3,x_4)\in\Ss$ does not lead us to the contradiction, since $P$ is not a solution of the \textit{whole} system $\Ss$.

\end{remark}

The information of the author:

Artem N. Shevlyakov

Omsk Branch of Institute of Mathematics, Siberian Branch of the Russian Academy of Sciences

644099 Russia, Omsk, Pevtsova st. 13

Phone: +7-3812-23-25-51.

e-mail: \texttt{a\_shevl@mail.ru}

\end{document}